\def\RR{\mathbb{R}}
\def\EE{\mathbb{E}}
\def\ZZ{\mathbb{Z}}
\def\DD{\Delta}
\newcommand{\prox}{\operatorname{prox}}
\newcommand{\argmin}{\operatornamewithlimits{argmin}}
\DeclareMathOperator*{\dist}{dist}
\def\tPhi{{\tilde{\Phi}}}
\def\G{{\cal G}}
\def\P{{\cal P}}
\def\S{{\cal S}}
\newtheorem{theorem}{Theorem}
\newtheorem{lemma}[theorem]{Lemma}
\newtheorem{corollary}[theorem]{Corollary}
\newtheorem{property}[theorem]{Property}
\title{Stochastic Proximal Methods for Non-Smooth Non-Convex Constrained Sparse Optimization}
\author{%
  Michael R. Metel \\
  RIKEN Center for Advanced Intelligence Project \\
  Tokyo, Japan \\
  \texttt{michaelros.metel@riken.jp} 
  \AND
  Akiko Takeda \\
  Department of Creative Informatics\\
  Graduate School of Information Science and Technology \\
  University of Tokyo\\
  RIKEN Center for Advanced Intelligence Project\\
  Tokyo, Japan\\
  \texttt{takeda@mist.i.u-tokyo.ac.jp}
}
\begin{document}

\maketitle

\begin{abstract}
This paper focuses on stochastic proximal gradient methods for optimizing a smooth non-convex loss 
function with a non-smooth non-convex regularizer and convex constraints. To the best of our knowledge 
we present the first non-asymptotic convergence results for this class of problem. We present two 
simple stochastic proximal gradient algorithms, for general stochastic and finite-sum optimization 
problems, which have the same or superior convergence complexities compared to the current best 
results for the unconstrained problem setting. In a numerical experiment we compare our algorithms 
with the current state-of-the-art deterministic algorithm and find our algorithms to exhibit superior 
convergence.
\end{abstract}

\section{Introduction}
\label{Int}

In this paper we consider optimization problems of the form
\begin{alignat}{6}\label{eq:1}
&\min\limits_{w\in\RR^d}&&\text{ }\Phi(w):=f(w)+g(w)+h(w),
\end{alignat}
where $f(w)$ has a Lipschitz continuous gradient and $h(w)$ is a proper closed 
convex function. We assume that $g(w)$ and $h(w)$ have proximal operators that can be efficiently 
computed. In addition, we assume that 
\begin{alignat}{6}\label{eq:genobj}
&f(w):=\EE_{\xi}[F(w,\xi)],
\end{alignat}
where $\xi\in \RR^p$ is a random vector following a probability distribution $P$ from which 
i.i.d. samples can be generated. We will also consider the finite-sum problem with 
\begin{alignat}{6}\label{eq:fsobj}
&f(w):=\frac{1}{n}\sum_{j=1}^n f_j(w),
\end{alignat}
where each $f_j(w)=F(w,\xi_j)$ has a Lipschitz continuous gradient.

The function $f(w)$ is intended to model the objective of our optimization problem, such as a loss 
function in empirical risk minimization, an agent's utility function in portfolio optimization, 
or a statistical procedure we want to perform on collected data, where non-convex smooth functions 
arise naturally. In particular, non-convex loss functions have been shown to 
achieve better generalization \citep{shen2003}, prospect theory \citep{kahneman1979} motivates the use 
of S-shaped utility functions, and principal components of a dataset can be computed by using a 
non-convex smooth objective function. 

In many applications of optimization, a sparse 
solution is desirable as it avoids overfitting to sampled data, and simplifies the interpretation of 
the result and its implementation. Our motivation for $g(w)$ is to be a non-smooth 
non-convex regularizer, such as SCAD \citep{fan2001}, MCP \citep{zhang2010}, the log-sum penalty 
\citep{candes2008}, or the capped $l_1$ norm, which are able to better approximate the $l_0$ norm than 
their convex or smooth counterparts. The function $h(w)$ allows us to include convex constraints 
to our problem through the use of an indicator function of the convex feasible region. 

We now present one concrete example of a sparse constrained optimization problem which fits within our 
assumptions, and which will also be used in our numerical experiments. Two more involved  
applications, {\em 
Sparse binary classification with outlier detection and fairness 
constraints} and {\em Sparse portfolio optimization using S-shaped utility with loss aversion} are 
included in Section 1 of the supplementary material, with implementation details for all three 
applications.

{\em Non-negative sparse principal component analysis:} Principal component analysis (PCA) finds a 
lower dimensional approximation of a dataset, with the non-negative sparse extension having 
applications in economics, bioinformatics and computer vision \citep{zass2007}. Given a data 
set $x\in \RR^{d\times n}$, we find its first sparse non-negative principal component by solving
\begin{alignat}{6}\label{eq:ap1}
&\min&&\text{ }-\frac{1}{2n}\sum_{j=1}^n(w^Tx_j)^2+g(w)\\
&\text{s.t.}&&||w||_2\leq 1, w\geq 0.\nonumber
\end{alignat}
The objective $f(w)=-\frac{1}{2n}\sum_{j=1}^n(w^Tx_j)^2$ is a smooth non-convex function, $g(w)$ can 
be taken as one of the non-smooth non-convex regularizers previously mentioned, and the constraints 
have a closed form projection \citep[Theorem 7.1]{bauschke2018}.

{\bf Related work:} First order stochastic methods for the case of a non-smooth convex regularizer 
$g(w)$ with $h(w)=0$ is an active research area. Non-asymptotic convergence results 
were first achieved in \citep{ghadimi2016}. For finite-sum problems, \citet{reddi2016pro} were the 
first to develop a proximal stochastic variance reduced gradient algorithm with improved convergence 
complexity. 

For the problem of solving \eqref{eq:1} where neither the function $f(w)$ nor $g(w)$ is 
convex, the current body of research is limited. \citet{kawashima2018} consider $g(w)$ as a 
non-smooth quasi-convex function and achieve the same convergence complexity as in 
\citep{ghadimi2016}. The only 
other non-asymptotic 
convergence results for a non-smooth non-convex function $g(w)$ to our knowledge are found in 
\citep{xu2018} and \citep{metel2019}. \citet{xu2018} assume that $f(w)=f^1(w)-f^2(w)$, where 
both $f^1(w)$ and $f^2(w)$ are convex, $f^1(w)$ is smooth, $f^2(w)$ has a H\"{o}lder continuous 
gradient, and $h(w)=0$. In \citep{metel2019} it is assumed that $h(w)=0$. 

{\bf Our contributions:}
\begin{itemize}
	\item We present a mini-batch stochastic proximal algorithm for general 
	stochastic objectives of the form \eqref{eq:genobj}, and a variance reduced stochastic 
	proximal algorithm for finite-sum problems of the form \eqref{eq:fsobj}. We are not aware of any 
	other works proving non-asymptotic convergence for this type of problem. 
	
	\item We achieve the same or better convergence complexities as demonstrated in 
	\citep{xu2018,metel2019} while considering a more general problem setting, which are summarized 
	in Table \ref{t:1}. The complexities are in terms 
	of the number of gradient calls and proximal operations, see Section \ref{prelims}. 
	
	\item We implement both algorithms and show superior convergence compared 
	to a state-of-the-art deterministic algorithm. 
\end{itemize} 
\begin{table*}[ht]
	\caption{Comparison of convergence complexities obtained in \citep{xu2018,metel2019} (with 
	$h(w)=0$) and this paper.}
	\label{t:1}
	%	\vskip 0.15in
	\begin{center}
		\begin{small}
			\begin{sc}
				\tabulinesep=0.6mm
				\tabcolsep=1.8mm
				\begin{tabu}{llccc}
					\hline
					Algorithm	& Reference &\begin{tabular}{@{}c@{}}Finite\\-sum\end{tabular} & 
					\begin{tabular}{@{}c@{}}Gradient Call\\ 
						Complexity\end{tabular} & \begin{tabular}{@{}c@{}}Proximal Operator\\ 
						Complexity\end{tabular}\\
					\hline
					MBSPA & Corollary \ref{mbcomp}& 
					$\times$ & $O(\epsilon^{-5})$ &$O(\epsilon^{-3})$\\
					VRSPA & Corollary \ref{fscomp}& 
					$\surd$ & $O(n^{2/3}\epsilon^{-3})$ 
					&$O(\epsilon^{-3})$\\
					MBSGA & \begin{tabular}{@{}l@{}}Corollary 7 ,\\\citep{metel2019}\end{tabular}& 
					$\times$ & $O(\epsilon^{-5})$ &$O(\epsilon^{-4})$\\
					VRSGA & \begin{tabular}{@{}l@{}}Corollary 12 ,\\\citep{metel2019}\end{tabular}& 
					$\surd$ & $O(n^{2/3}\epsilon^{-3})$ 
					&$O(\epsilon^{-3})$\\				
					SSDC-SPG & \begin{tabular}{@{}l@{}}Theorem 7 a,\\\citep{xu2018}\end{tabular}
					& $\times$ & $O(\epsilon^{-5})$ & $O(\epsilon^{-5})$ \\
					SSDC-SVRG & \begin{tabular}{@{}l@{}}Theorem 7 c,\\\citep{xu2018}\end{tabular} & 
					$\surd$  & $\tilde{O}(n\epsilon^{-3})$ & 
					$\tilde{O}(\epsilon^{-3})$\\
					\hline
				\end{tabu}
			\end{sc}		
		\end{small}
	\end{center}
	\vskip -0.1in
\end{table*}

\section{Background}
\label{prelims}
We assume that $f(w)$ has a Lipschitz continuous gradient with parameter $L$, 
$$||\nabla f(w)-\nabla f(x)||_2\leq L||w-x||_2,$$
which we will denote as being an $L$-smooth function. In the finite-sum case, we assume that each 
$f_j(w)$ is $L$-smooth. Given a sample $\xi^k\sim P$, generated in iteration $k$ of an algorithm, we 
assume we can generate an unbiased stochastic gradient $\nabla F(w,\xi^k)$ such that  
\begin{alignat}{6}
&\EE[\nabla F(w,\xi^k)]=\nabla f(w),\label{eq:unbgrad}
\end{alignat}
and for some constant $\sigma$,
\begin{alignat}{6}
&\EE||\nabla F(w,\xi^k)-\nabla f(w)||^2_2\leq \sigma^2.\label{vbound}
\end{alignat}
Let $\partial \Phi(w)$ denote the limiting subdifferential of our objective, defined as
$$\partial \Phi(w):=\{v: \exists w^k \xrightarrow{\Phi} w, v^k\in \hat{\partial}\Phi(w^k)\text{ with } 
v^k\rightarrow v\},$$
where $\hat{\partial}\Phi(w):=\{v: \liminf\limits_{x\rightarrow w,x\neq w} 
\frac{\Phi(x)-\Phi(w)-\langle 
	v,x-w\rangle}{||x-w||_2}\geq 0\}$ and $w^t\xrightarrow{\Phi}w$ signifies the sequence 
$w^k\rightarrow 
w$ and $\Phi(w^k)\rightarrow \Phi(w)$. The limiting subdifferential is equal to the gradient and 
subdifferential when the function is continuously differentiable and proper convex, respectively. We 
also assume the 
proximal operators of $g(w)$ and $h(w)$ are nonempty for all $w$, and that they can be computed 
efficiently, 
$$\prox_{\lambda g}(w):=\argmin\limits_{x\in 
	\RR^d}\left\{\frac{1}{2\lambda}||w-x||^2_2+g(x)\right\}$$ 
\begin{alignat}{6}
\prox_{\gamma h}(w):=\argmin\limits_{x\in 
	\RR^d}\left\{\frac{1}{2\gamma}||w-x||^2_2+h(x)\right\},\label{proxh}
\end{alignat}
for $\lambda,\gamma>0$. In particular, let us denote an element of $\prox_{\lambda g}(w)$ as  
\begin{alignat}{6}
&\zeta^{\lambda}(w)\in \prox_{\lambda g}(w).\label{zprox}
\end{alignat}
We note that $\prox_{\gamma h}(w)$ maps to a singleton since $h(w)$ is proper, closed, and convex, see 
for example \citep[Theorem 6.3]{beck2017}. 

We are interested in the convergence complexity of finding 
an $\epsilon$-accurate solution, using what we call the {\it subdifferential mapping},
$$\P_{\gamma}(w,\S):=\left\{\frac{1}{\gamma}\left(w-\prox_{\gamma h}(w-\gamma 
s)\right):s\in\S\right\}\label{gsgm},$$
where $\S\subseteq\RR^d$ is the subdifferential of a function, which is a closed set wherever the 
function is finite \citep[Theorem 8.6]{rockafellar2009}. In particular, for 
$$\G_{\gamma}(w):=\P_{\gamma}(w,\nabla f(w)+\partial g(w)),$$
we are interested in algorithm solutions $\bar{w}$, with accompanying $\bar{\gamma}>0$, which 
satisfy   
\begin{alignat}{6}
&\EE\left[\dist(0,\G_{\bar{\gamma}}(\bar{w}))\right]\leq \epsilon.\label{epsol}
\end{alignat}
We will also use the notation 
$\P_{\gamma}(w,G)$, where $G\in\RR^d$ is the gradient or a particular subgradient of a function in our 
analysis. $\G_{\gamma}(w)$ generalizes the gradient mapping $\P_{\gamma}(w,\nabla f(w))$ 
which has been used in the convergence criterion for  
proximal stochastic gradient methods for solving \eqref{eq:1} with $g(w)=0$, such as in 
\citep{ghadimi2016,reddi2016pro,li2018}. To motivate our measure of convergence \eqref{epsol}, 
consider the case where 
$$\dist(0,\G_{\bar{\gamma}}(\bar{w}))=0.$$ 
This implies that there exists an element 
$s_g(\bar{w})\in \partial g(\bar{w})$ such that 
\begin{alignat}{6}
&0=\P_{\bar{\gamma}}(\bar{w},\nabla 
f(\bar{w})+s_g(\bar{w})),\label{zeroP}
\end{alignat} 
and in particular 
\begin{alignat}{6}
&\bar{w}=\prox_{\bar{\gamma} h}(\bar{w}-\bar{\gamma}(\nabla f(\bar{w})+s_g(\bar{w}))).\label{zeroh}
\end{alignat}
From the first order optimality condition of $\prox_{\bar{\gamma} h}(\bar{w}-\bar{\gamma}(\nabla 
f(\bar{w})+s_g(\bar{w})))$ in \eqref{proxh},
$$0\in -\P_{\bar{\gamma}}(\bar{w},\nabla 
f(\bar{w})+s_g(\bar{w}))+\nabla f(\bar{w})+s_g(\bar{w})+\partial h(\prox_{\bar{\gamma} 
	h}(\bar{w}-\bar{\gamma} 
(\nabla f(\bar{w})+s_g(\bar{w})))).$$
Applying \eqref{zeroP} and \eqref{zeroh},
$$0\in \nabla f(\bar{w})+\partial g(\bar{w})+\partial h(\bar{w}).$$
We will measure algorithm complexity in terms of the number of gradient calls and proximal operations. 
A gradient call is either computing $\nabla F(w,\xi^k)$ given a sample $\xi^k$, or in the 
finite-sum case, returning $\nabla f_j(w)$ for a given $j$.

\section{Auxiliary functions of $\Phi(w)$}
This section uses the technique found in \citep{metel2019} with the addition of a function $h(w)$. Our 
algorithms rely on a sequence of majorizing functions of 
$$\tPhi_{\lambda}(w):=f(w) + e_{\lambda}g(w)+h(w)$$
where $g(w)$ has been replaced by its Moreau envelope,
$$e_{\lambda}g(w):=\inf_{x\in \RR^d} \left\{\frac{1}{2\lambda}||w-x||^2_2+g(x)\right\},$$
in $\Phi(w)$. Taking $x=w$, we note that 
\begin{alignat}{6}
&&&e_{\lambda}g(w)\leq g(w).\label{melb}
\end{alignat}
Given iteration $w^k$, a smooth majorizing function of $f(w) + e_{\lambda}g(w)$ can be 
written as 
\begin{alignat}{6}
&E_{\lambda}^k(w):=f(w)+U^k_{\lambda}(w),\label{Efunc}
\end{alignat}
where 
$$U^k_{\lambda}(w)=\frac{1}{2\lambda}||w||^2_2-\left(D^{\lambda}(w^k)+\frac{1}{\lambda}\zeta^{\lambda}(w^k)^T(w-w^k)\right),$$
and
\begin{alignat}{6}
&D^{\lambda}(w)=\sup_{x\in 
	\RR^d}\left(\frac{1}{\lambda}w^Tx-\frac{1}{2\lambda}||x||^2_2-g(x)\right).\label{bigD}
\end{alignat}

We will only need to evaluate the gradient of $E_{\lambda}^k(w)$, which is simply  
\begin{alignat}{6}
&\nabla E_{\lambda}^k(w)=\nabla f(w)+\frac{1}{\lambda}(w-\zeta^{\lambda}(w^k)).\label{grade}
\end{alignat}
\begin{property}
	\label{Eprop}	
	The following holds for $E^k_{\lambda}(w)$.		
	\begin{alignat}{6}
	&E_{\lambda}^k(w)+h(w)\geq \tPhi_{\lambda}(w)\hspace{0 mm} \text{ for all 
	}w\in\RR^d\nonumber&\\
	&E_{\lambda}^k(w^k)+h(w^k)=\tPhi_{\lambda}(w^k)\nonumber&\\
	&E_{\lambda}^k(w) \text{ is } 
	L_{\lambda}:=\left(L+\frac{1}{\lambda}\right)-\text{smooth.}&\nonumber
	\end{alignat}	
\end{property}
For completeness, we provide the proof of Property 1 in Section 2 of the supplementary material.

\section{Mini-batch stochastic proximal algorithm}
The algorithm presented in this section makes use of 
\begin{alignat}{6}
\label{Agrad}	
&\nabla A^k_{\lambda, M}(w,\xi^k)=\frac{1}{M}\sum_{j=1}^{M}\nabla 
F(w,\xi^k_j)+\frac{1}{\lambda}(w-\zeta^{\lambda}(w^k)),
\end{alignat}
which is a stochastic version of $\nabla E_{\lambda}^k(w)$, replacing $\nabla f(w)$ with an unbiased 
estimate using $M$ samples $\xi^k_j$, $j=1,...,M$ in iteration $k$.
\begin{algorithm}[H]
	\caption{Mini-batch stochastic proximal algorithm (MBSPA)}
	\label{alg:one}
	\begin{algorithmic}
		\STATE {\bfseries Input:} $w^{1}\in \RR^d$, $N\in\ZZ_{>0}$, $\alpha,\theta\in \RR$	
		\STATE $M:=\lceil N^{\alpha}\rceil\text{, }\lambda=\frac{1}{N^{\theta}}$
		\STATE $L_{\lambda}=L+\frac{1}{\lambda}\text{, }\gamma=\frac{1}{L_{\lambda}}$
		\STATE $R\sim \text{uniform}\{1,...,N\}$
		\FOR{$k=1,2,...,R-1$} 
		\STATE $\zeta^{\lambda}(w^k)\in \prox_{\lambda g}(w^k)$
		\STATE Sample $\xi^k\sim P^{M}$
		\STATE Compute $\nabla A^k_{\lambda, M}(w,\xi^k)$ \eqref{Agrad}
		\STATE $w^{k+1}=\prox_{\gamma h}(w^k-\gamma\nabla A^k_{\lambda, M}(w^k,\xi^k))$
		\ENDFOR
		\STATE {\bfseries Output:} $\bar{w}^R\in \prox_{\lambda g}(w^R)$
	\end{algorithmic}
\end{algorithm}

\subsection{Convergence analysis}
The convergence analysis of MBSPA follows the technique of \cite{ghadimi2016} 
adapted to our problem. We first define the following gradient mappings in iteration $k$,  
$$\G^k_{\gamma, A}(w^k):=\P_{\gamma}(w^k,\nabla A^k_{\lambda, M}(w^k,\xi^k))$$
and
$$\G^k_{\gamma, E}(w^k):=\P_{\gamma}(w^k,\nabla E^k_{\lambda}(w^k))\label{stgm}.$$
We also note that 
\begin{alignat}{6}
\label{gradmap}
&w^{k+1}&=&\prox_{\gamma h}(w^k-\gamma\nabla A^k_{\lambda, M}(w^k,\xi^k))\nonumber\\
&&=&w^k-\gamma\left(\frac{1}{\gamma}\left(w^k-\prox_{\gamma h}(w^k-\gamma 
\nabla A^k_{\lambda, M}(w^k,\xi^k))\right)\right)\nonumber\\
&&=&w^k-\gamma\G^k_{\gamma, A}(w^k).
\end{alignat}
The following lemma bounds $\EE\left[||\G^R_{\gamma, E}(w^R)||^2_2\right]$, which will be used to 
bound $\EE\left[\dist(0,\G_{\bar{\gamma}}(\bar{w}^R))\right]$ in Theorem \ref{mbconv}. 
The proof can be found in Section 3 of the supplementary material.
\begin{lemma}	
	\label{th:1}
	For an initial value $w_1\in \RR^d$, $N\in\ZZ_{>0}$, and $\alpha,\theta \in \RR$, MBSPA 
	generates 
	$w^R$ satisfying the following bound.  	
	\begin{alignat}{6}
	&&\EE||\G^R_{\gamma, E}(w^R)||^2_2&\leq
	\frac{(L+N^{\theta})}{N}\tilde{\DD}+\frac{6}{\lceil
		N^{\alpha}\rceil}\sigma^2,\nonumber
	\end{alignat}
	where $\tilde{\DD}=4(\tPhi_{\lambda}(w^1)-\tPhi_{\lambda}(w^*_{\lambda}))$ and 
	$w^*_{\lambda}$ is a global minimizer of $\tPhi_{\lambda}(\cdot)$.
\end{lemma}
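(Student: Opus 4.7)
The plan is to carry out a descent-style analysis on the sequence $\tPhi_{\lambda}(w^k)$ using the majorizing functions $E^k_{\lambda}(w)+h(w)$ and the inexact gradient $\nabla A^k_{\lambda,M}$. Throughout, let $\delta^k:=\nabla A^k_{\lambda,M}(w^k,\xi^k)-\nabla E^k_{\lambda}(w^k)=\frac{1}{M}\sum_{j=1}^{M}\nabla F(w^k,\xi^k_j)-\nabla f(w^k)$, so that by \eqref{vbound} and independence of the mini-batch, $\EE\|\delta^k\|^2_2\leq \sigma^2/M$ and $\EE[\delta^k\mid \mathcal{F}^k]=0$ where $\mathcal{F}^k$ is the filtration up to iteration $k$.

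First, apply the descent lemma to the $L_{\lambda}$-smooth function $E^k_{\lambda}$ at $w^{k+1}=w^k-\gamma\G^k_{\gamma,A}(w^k)$ from \eqref{gradmap}, then separately bound $h(w^{k+1})-h(w^k)$ using the first-order optimality condition of the proximal step, namely $\tfrac{1}{\gamma}(w^k-\gamma\nabla A^k_{\lambda,M}(w^k,\xi^k)-w^{k+1})\in\partial h(w^{k+1})$, combined with convexity of $h$. Summing the two inequalities and plugging in $\gamma=1/L_{\lambda}$ cancels the quadratic $h$-contribution and yields
\begin{equation*}
E^k_{\lambda}(w^{k+1})+h(w^{k+1}) \leq E^k_{\lambda}(w^k)+h(w^k) + \gamma\langle \delta^k,\G^k_{\gamma,A}(w^k)\rangle - \frac{\gamma}{2}\|\G^k_{\gamma,A}(w^k)\|^2_2.
\end{equation*}
Property \ref{Eprop} then converts the left-hand side into an upper bound on $\tPhi_{\lambda}(w^{k+1})$ and the first two right-hand terms into $\tPhi_{\lambda}(w^k)$.

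Next, I need to translate $\|\G^k_{\gamma,A}(w^k)\|^2_2$ into a bound on $\|\G^k_{\gamma,E}(w^k)\|^2_2$. Nonexpansiveness of $\prox_{\gamma h}$ (since $h$ is proper closed convex) gives $\|\G^k_{\gamma,E}(w^k)-\G^k_{\gamma,A}(w^k)\|_2\leq\|\delta^k\|_2$, hence $\|\G^k_{\gamma,A}(w^k)\|^2_2\geq\tfrac{1}{2}\|\G^k_{\gamma,E}(w^k)\|^2_2-\|\delta^k\|^2_2$. For the cross term I split $\langle \delta^k,\G^k_{\gamma,A}(w^k)\rangle=\langle \delta^k,\G^k_{\gamma,E}(w^k)\rangle+\langle \delta^k,\G^k_{\gamma,A}(w^k)-\G^k_{\gamma,E}(w^k)\rangle$; the first term vanishes in conditional expectation because $\G^k_{\gamma,E}(w^k)$ is $\mathcal{F}^k$-measurable, and the second is bounded in absolute value by $\|\delta^k\|^2_2$ via Cauchy--Schwarz and the same nonexpansiveness.

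Taking total expectations, telescoping over $k=1,\ldots,N$, using $\EE\tPhi_{\lambda}(w^{N+1})\geq \tPhi_{\lambda}(w^*_{\lambda})$, and finally dividing by $N$ (exploiting that $R$ is uniform on $\{1,\ldots,N\}$) delivers
\begin{equation*}
\EE\|\G^R_{\gamma,E}(w^R)\|^2_2 \leq \frac{4(\tPhi_{\lambda}(w^1)-\tPhi_{\lambda}(w^*_{\lambda}))}{\gamma N} + \frac{6\sigma^2}{M},
\end{equation*}
which becomes the claimed bound after substituting $\gamma=1/(L+N^{\theta})$ and $M=\lceil N^{\alpha}\rceil$. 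The main obstacle is the careful handling of $h$ in the descent step: the smoothness inequality applies only to $E^k_{\lambda}$, so the prox optimality condition must be invoked to correctly absorb $h(w^{k+1})$ and cancel one $\tfrac{1}{2\gamma}\|w^{k+1}-w^k\|^2_2$ term. The second delicate point is that $\G^k_{\gamma,A}(w^k)$ is random and correlated with $\delta^k$, so the unbiasedness of $\delta^k$ cannot be used directly and must be routed through $\G^k_{\gamma,E}(w^k)$ plus a nonexpansiveness estimate.
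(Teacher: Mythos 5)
Your proposal is correct and follows essentially the same route as the paper's proof (the Ghadimi--Lan--Zhang-style analysis the authors explicitly say they adapt): descent lemma on the $L_{\lambda}$-smooth majorant $E^k_{\lambda}$, the prox optimality condition plus convexity of $h$ to absorb $h(w^{k+1})$, nonexpansiveness of $\prox_{\gamma h}$ to relate $\G^k_{\gamma,A}$ to $\G^k_{\gamma,E}$ and decouple the noise, then telescoping with the uniform index $R$. The bookkeeping is right --- the per-iteration noise contribution $\tfrac{3\gamma}{2}\|\delta^k\|^2_2$ and the $\tfrac{\gamma}{4}\|\G^k_{\gamma,E}\|^2_2$ coefficient reproduce exactly the constants $4$ in $\tilde{\DD}$ and $6$ in the variance term of the stated bound.
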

In order to prove the convergence of $\EE\left[\dist(0,\G_{\bar{\gamma}}(\bar{w}^R)\right]$, we 
require the following two properties, the proofs of which can be found in Section 4 of the 
supplementary material. 
\begin{property}	
	\label{eq:switch}
	Assume that $g(w)$ is Lipschitz continuous with parameter $l$ and $\bar{\gamma}\geq \gamma$, then
	\begin{alignat}{6}
	&&\dist(0,\G_{\bar{\gamma}}(\zeta^{\lambda}(w^k)))&\leq 	
	&||\G^k_{\gamma, E}(w^k)||_2+2l\lambda\left(\frac{2}{\bar{\gamma}}+L\right).\nonumber
	\end{alignat}	
\end{property}
\begin{property}	
	\label{le:2}	
	Let $w^*$ be a global minimizer of $\Phi(\cdot)$ and let $w^*_{\lambda}$ be a global minimizer 
	of $\tPhi_{\lambda}(\cdot)$. Assume that $g(w)$ is Lipschitz continuous with 
	parameter $l$, then  
	$$\tPhi_{\lambda}(w)-\tPhi_{\lambda}(w^*_{\lambda})\leq 
	\Phi(w)-\Phi(w^*) + \frac{l^2\lambda}{2}.$$	
\end{property}
\begin{theorem}	
	\label{mbconv}	
	Assume that $g(w)$ is Lipschitz continuous with parameter $l$ and 
	$\bar{\gamma}=\frac{1}{N^{\tau}}$ for 
	$\tau\leq \theta$. The output $\bar{w}^R$ of 
	MBSPA satisfies  	
	\begin{alignat}{6}
	&\EE\left[\dist(0,\G_{\bar{\gamma}}(\bar{w}^R))\right]&\leq&\sqrt{\frac{(L+N^{\theta})}{N}
		\left(\DD+\frac{2l^2}{N^{\theta}}\right)}+\sqrt{\frac{6\sigma^2}{\lceil
			N^{\alpha}\rceil}}+\frac{2l}{N^{\theta}}\left(2N^{\tau}+L\right),\nonumber
	\end{alignat}	
	where $\DD=4(\Phi(w^1)-\Phi(w^*))$ and $w^*$ is a global minimizer of $\Phi(\cdot)$.		
\end{theorem}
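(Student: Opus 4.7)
The plan is to combine Lemma \ref{th:1}, Property \ref{eq:switch}, and Property \ref{le:2} in a short chain, then convert the resulting second-moment bound on $\G_{\gamma,E}^R(w^R)$ into the first-moment bound on $\dist(0,\G_{\bar\gamma}(\bar w^R))$ by Jensen's inequality and subadditivity of the square root. Observe first that by construction $\bar w^R \in \prox_{\lambda g}(w^R)$, i.e. $\bar w^R = \zeta^\lambda(w^R)$ in the notation of \eqref{zprox}, so Property \ref{eq:switch} applies directly at $k=R$.

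First, I would apply Property \ref{eq:switch} with $\bar\gamma = 1/N^\tau$ and $\lambda = 1/N^\theta$ (noting $\bar\gamma \geq \gamma$ since $\tau \leq \theta$ gives $N^\tau \leq N^\theta = L_\lambda - L \leq L_\lambda$), obtaining
\begin{equation*}
\dist(0,\G_{\bar\gamma}(\bar w^R)) \;\leq\; \|\G_{\gamma,E}^R(w^R)\|_2 + \frac{2l}{N^\theta}\bigl(2N^\tau + L\bigr).
\end{equation*}
Taking expectations over the randomness of $R$ and the samples, the additive deterministic term survives unchanged and the remaining task is to bound $\EE\|\G_{\gamma,E}^R(w^R)\|_2$.

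Next, by Jensen's inequality applied to the concave map $t \mapsto \sqrt{t}$,
\begin{equation*}
\EE\|\G_{\gamma,E}^R(w^R)\|_2 \;\leq\; \sqrt{\EE\|\G_{\gamma,E}^R(w^R)\|_2^2},
\end{equation*}
and Lemma \ref{th:1} gives
$\EE\|\G_{\gamma,E}^R(w^R)\|_2^2 \leq \frac{L+N^\theta}{N}\tilde\DD + \frac{6\sigma^2}{\lceil N^\alpha\rceil}.$
I would then split the square root via $\sqrt{a+b} \leq \sqrt{a}+\sqrt{b}$ for $a,b\geq 0$, yielding two terms: one involving $\tilde\DD$ and one matching the $\sqrt{6\sigma^2/\lceil N^\alpha\rceil}$ term in the statement.

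Finally, I would replace $\tilde\DD$ by $\DD + 2l^2/N^\theta$. Since $\tilde\DD = 4(\tPhi_\lambda(w^1) - \tPhi_\lambda(w^*_\lambda))$, Property \ref{le:2} with $w = w^1$ gives $\tPhi_\lambda(w^1) - \tPhi_\lambda(w^*_\lambda) \leq \Phi(w^1) - \Phi(w^*) + l^2\lambda/2$, so $\tilde\DD \leq 4(\Phi(w^1) - \Phi(w^*)) + 2l^2/N^\theta = \DD + 2l^2/N^\theta$. Substituting produces the first term of the claimed bound, and combining with the previous inequalities completes the proof. No step poses a serious obstacle — the only care needed is to check that $\bar\gamma \geq \gamma$ so Property \ref{eq:switch} is applicable and to use subadditivity of $\sqrt{\cdot}$ rather than trying to handle the sum under the square root directly.
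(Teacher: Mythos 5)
Your proposal is correct and follows essentially the same route as the paper's proof: apply Property \ref{eq:switch} at $k=R$ with $\bar w^R=\zeta^\lambda(w^R)$ after checking $\bar\gamma\geq\gamma$, take expectations, use Jensen's inequality together with Lemma \ref{th:1} and subadditivity of the square root, and finally bound $\tilde\DD$ via Property \ref{le:2}. No differences worth noting.
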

\begin{proof}	
	We first verify that $\bar{\gamma}=\frac{1}{N^{\tau}}\geq 
	\frac{1}{N^{\theta}}\geq\frac{1}{L+N^{\theta}}=\gamma$. From Property \ref{eq:switch}, taking 
	$\zeta^{\lambda}(w^R)=\bar{w}^R$,
	$$\dist(0,\G_{\bar{\gamma}}(\bar{w}^R))\leq 	
	||\G^R_{\gamma, E}(w^R)||_2+2l\lambda\left(\frac{2}{\bar{\gamma}}+L\right).$$	
	Taking its expectation, 	
	\begin{alignat}{6}
	&\EE\left[\dist(0,\G_{\bar{\gamma}}(\bar{w}^R))\right]&\leq& 
	\EE[||\G^R_{\gamma, E}(w^R)||_2]+2l\lambda\left(\frac{2}{\bar{\gamma}}+L\right)\nonumber\\
	&&\leq& 
	\sqrt{\EE\left[||\G^R_{\gamma, 
			E}(w^R)||^2_2\right]}+\frac{2l}{N^{\theta}}\left(2N^{\tau}+L\right)\nonumber\\
	&&\leq&\sqrt{\frac{(L+N^{\theta})}{N}\tilde{\DD}}+\sqrt{\frac{6\sigma^2}{\lceil 			
	N^{\alpha}\rceil}}+\frac{2l}{N^{\theta}}\left(2N^{\tau}+L\right),\nonumber
	\end{alignat}	
	where the second inequality uses Jensen's inequality and the third inequality follows from 
	Lemma \ref{th:1}.
	The result then follows using Property \ref{le:2} as  	
	\begin{alignat}{6}
	&&\tilde{\DD}=4(\tPhi_{\lambda}(w^1)-\tPhi_{\lambda}(w^*_{\lambda}))&\leq 4(\Phi(w^1)-\Phi(w^*)) + 
	2l^2\lambda\nonumber\\
	&&&=\DD+\frac{2l^2}{N^{\theta}}.\nonumber
	\end{alignat} 	
\end{proof}
Having bounded the expected distance of $\G_{\bar{\gamma}}(\bar{w}^R)$ from the origin, we 
prove an $\epsilon$-accurate point convergence complexity.
\begin{corollary}	
	\label{mbcomp}
	Assume that $g(w)$ is Lipschitz continuous with 
	parameter $l$. To obtain an $\epsilon$-accurate solution \eqref{epsol} using MBSPA, the 
	gradient call complexity is $O(\epsilon^{-5})$ and the proximal operator complexity is 
	$O(\epsilon^{-3})$ choosing $\theta=\frac{1}{3}$, 
	$\alpha=\frac{2}{3}$, and $\tau=0$. 	
\end{corollary}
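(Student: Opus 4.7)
The plan is to substitute $\theta = 1/3$, $\alpha = 2/3$, $\tau = 0$ directly into the bound of Theorem \ref{mbconv} and show each of the three summands on the right-hand side is $O(N^{-1/3})$; inverting this rate gives $N = O(\epsilon^{-3})$, from which the claimed gradient-call and proximal-operator complexities follow by counting the per-iteration cost of MBSPA. First I would verify the admissibility hypothesis $\tau \leq \theta$ of Theorem \ref{mbconv}, which here reduces to $0 \leq 1/3$.

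Next, I would bound the three terms separately. For the first, $\sqrt{(L + N^{1/3}) N^{-1} (\DD + 2 l^2 N^{-1/3})}$, expanding the product under the square root yields a sum of four pieces whose slowest-decaying component is $\DD \cdot N^{-2/3}$ (the other three, $L\DD N^{-1}$, $2 l^2 N^{-1}$, and $2 l^2 L N^{-4/3}$, all decay at least as fast), so the whole term is $O(N^{-1/3})$. The second term $\sqrt{6 \sigma^2 / \lceil N^{2/3} \rceil}$ is exactly $O(N^{-1/3})$. The third term $(2 l / N^{1/3})(2 N^{0} + L)$ is $(2 l (2 + L)) N^{-1/3}$, again $O(N^{-1/3})$. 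Summing gives a bound of the form $C N^{-1/3}$ for a constant $C$ depending on $L$, $l$, $\sigma$, and $\DD$.

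To meet the criterion \eqref{epsol} it then suffices to choose $N = \lceil (C/\epsilon)^3 \rceil = O(\epsilon^{-3})$. For complexity accounting, each of the $R - 1 \leq N$ inner iterations of MBSPA performs $M = \lceil N^{2/3} \rceil$ stochastic gradient evaluations in forming $\nabla A^k_{\lambda, M}$, together with one proximal call to $\prox_{\lambda g}$ and one to $\prox_{\gamma h}$; the output step contributes one additional proximal call. The total gradient-call count is therefore $O(N \cdot N^{2/3}) = O(N^{5/3}) = O(\epsilon^{-5})$, while the total proximal-operator count is $O(N) = O(\epsilon^{-3})$, as claimed.

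There is no substantive analytic obstacle: the argument is purely a balancing-of-exponents computation given Theorem \ref{mbconv}. The real content is recognizing that $\theta = 1/3$ and $\alpha = 2/3$ are precisely the exponents that equalize the three asymptotic error sources (Moreau-envelope smoothing bias, mini-batch variance, and proximal-mapping residual), and that $\tau = 0$ is the largest value of $\tau$ compatible with the third term staying at the $O(N^{-1/3})$ rate rather than dominating.
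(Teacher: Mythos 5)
Your proposal is correct and follows essentially the same route as the paper's own proof: substitute $\theta=\tfrac{1}{3}$, $\alpha=\tfrac{2}{3}$, $\tau=0$ into Theorem \ref{mbconv}, observe that all three terms are $O(N^{-1/3})$ so that $N=O(\epsilon^{-3})$ iterations suffice, and then count $\lceil N^{\alpha}\rceil=O(\epsilon^{-2})$ gradient calls and two proximal operations per iteration. Your explicit expansion of the product under the square root and the closing remark on why these exponents balance the error sources are slightly more detailed than the paper's presentation, but the argument is the same.
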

\begin{proof}	
	From Theorem \ref{mbconv},	
	\begin{alignat}{6}
	&\EE\left[\dist(0,\G_{\bar{\gamma}}(\bar{w}^R))\right]&\leq&\sqrt{\frac{(L+N^{\theta})}{N}
		\left(\DD+\frac{2l^2}{N^{\theta}}\right)}+\sqrt{\frac{6\sigma^2}{\lceil
			N^{\alpha}\rceil}}+\frac{2l}{N^{\theta}}\left(2+L\right)\nonumber\\
	&&=&O(N^{0.5\theta-0.5})+O(N^{-0.5\alpha})+O(N^{-\theta}).\nonumber	
	\end{alignat}	
	Setting $\theta=\frac{1}{3}$ and $\alpha=\frac{2}{3}$,   	
	\begin{alignat}{6}
	&&\EE\left[\dist(0,\G_{\bar{\gamma}}(\bar{w}^R))\right]&\leq 
	O(N^{-\frac{1}{3}}).\nonumber
	\end{alignat}	
	An $\epsilon$-accurate solution will require less than $N=O(\epsilon^{-3})$ 
	iterations. Two proximal operations are required per iteration, which establishes the proximal 
	operator complexity of $O(\epsilon^{-3})$. The number of gradient calls per iteration is $\lceil 
	N^\alpha\rceil=O(\epsilon^{-2})$. The number of gradient calls to get an 
	$\epsilon$-accurate solution is then	
	\begin{alignat}{6}
	&N\lceil N^\alpha\rceil=O(\epsilon^{-5}).\nonumber
	\end{alignat}	
\end{proof}

\section{Variance reduced stochastic proximal algorithm for finite-sum problems}
In this section we assume that 
$$f(w)=\frac{1}{n}\sum_{j=1}^nf_j(w),$$
where each $f_j(w)$ is $L$-smooth. 
\begin{algorithm}
	\caption{Variance reduced stochastic proximal algorithm (VRSPA)}
	\label{alg:two}
	\begin{algorithmic}
		\STATE {\bfseries Input:} $\tilde{w}^1\in \RR^d$, $N\in\ZZ_{>0}$, $\alpha,\theta\in \RR$
		\STATE $m=\lceil n^{\alpha}\rceil$, $b=m^2$ 
		\STATE $S=\lceil\frac{N}{m}\rceil$, $\lambda=(Sm)^{-\theta}$
		\STATE $L_{\lambda}=L+\frac{1}{\lambda}$, $\gamma=\frac{1}{6L_{\lambda}}$
		\STATE $R\sim \text{uniform}\{1,...,S\}$
		\FOR{$k=1,2,...,R$}
		\STATE $w^{k}_1=\tilde{w}^{k}$
		\STATE $G^k=\nabla f(\tilde{w}^{k})$
		\FOR{$t=1,2,...,m$}
		\STATE $\zeta^{\lambda}(w^k_t)\in \prox_{\lambda g}(w^k_t)$
		\STATE $I\sim \text{uniform}\{1,...,n\}^b$
		\STATE $V^k_t=\frac{1}{b}\sum_{j\in I}\left(\nabla f_j(w^k_t)-\nabla 		
		f_j(\tilde{w}^k)\right)+G^k+\frac{1}{\lambda}(w^k_t-\zeta^{\lambda}(w^k_t))$
		\STATE $w^k_{t+1}=\prox_{\gamma h}(w^k_t-\gamma V^k_t)$
		\ENDFOR		
		\STATE $\tilde{w}^{k+1}=w^k_{m+1}$
		\ENDFOR
		\STATE $T\sim \text{uniform}\{1,...,m\}$		
		\STATE {\bfseries Output:} $\bar{w}^R_T\in \prox_{\lambda g}(w^R_T)$
	\end{algorithmic}
\end{algorithm}
\subsection{Convergence analysis}
We require the function $E^{k,t}_{\lambda}(w)$ in our convergence analysis, which is 
constructed in the same way as $E^k_{\lambda}(w)$ \eqref{Efunc}, but using $w^k_t$ instead of $w^k$. 
This function possesses the same characteristics as found in Property \ref{Eprop}. In addition, let 
\begin{alignat}{6}
&\G^{k,t}_{\gamma, E}(w^k_t):=\P_{\gamma}(w^k_t,\nabla E^{k,t}_{\lambda}(w^k_t))\nonumber.
\end{alignat}
The convergence analysis follows the work of \cite{li2018} adapted to our problem. The proof of Lemma 
\ref{vrlemma} can be found in Section 5 of the supplementary material.
\begin{lemma}	
	\label{vrlemma}
	For an initial value $\tilde{w}_1$, $N\in\ZZ_{>0}$, and $\alpha,\theta\in\RR$, VRSGA 
	generates $w^R_T$ satisfying the following bound.	  	  	
	\begin{alignat}{6} 
	&&\EE\left[||\G^{R,T}_{\gamma, 
		E}(w^R_T)||^2_2\right]&\leq\tilde{\DD}\frac{L+(Sm)^{\theta}}{Sm}\nonumber
	\end{alignat}	
	where $\tilde{\DD}=36(\tPhi_{\lambda}(\tilde{w}^1)-\tPhi_{\lambda}(w^*_{\lambda}))$ and 
	$w^*_{\lambda}$ is a global minimizer of $\tPhi_{\lambda}(\cdot)$.	
\end{lemma}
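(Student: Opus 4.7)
My plan is to adapt the SVRG-style analysis of \citet{li2018} to the majorizing surrogates $E^{k,t}_{\lambda}(w)+h(w)$, exploiting the fact that these surrogates both dominate $\tPhi_{\lambda}$ pointwise and agree with it at the construction point (Property \ref{Eprop}). The central identity I will exploit is the telescoping relation
\begin{equation*}
\tPhi_{\lambda}(w^k_t)-\tPhi_{\lambda}(w^k_{t+1})\geq E^{k,t}_{\lambda}(w^k_t)+h(w^k_t)-\bigl(E^{k,t}_{\lambda}(w^k_{t+1})+h(w^k_{t+1})\bigr),
\end{equation*}
which follows because the left-hand side upper-bounds $E^{k,t}_{\lambda}(w^k_{t+1})+h(w^k_{t+1})$ by the majorization property while matching $E^{k,t}_{\lambda}(w^k_t)+h(w^k_t)$ exactly. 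This converts the problem into bounding the one-step descent of the $L_{\lambda}$-smooth auxiliary objective $E^{k,t}_{\lambda}+h$ under the proximal update $w^k_{t+1}=\prox_{\gamma h}(w^k_t-\gamma V^k_t)$.

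First I would derive a standard proximal-gradient descent lemma for the $L_{\lambda}$-smooth function $E^{k,t}_{\lambda}$: using $L_{\lambda}$-smoothness, the definition of $\G^{k,t}_{\gamma,E}$, and the three-point inequality for the proximal step with $\gamma=1/(6L_{\lambda})$, one obtains
\begin{equation*}
E^{k,t}_{\lambda}(w^k_{t+1})+h(w^k_{t+1})\leq E^{k,t}_{\lambda}(w^k_t)+h(w^k_t)-c_1\gamma\,\|\G^{k,t}_{\gamma,E}(w^k_t)\|_2^2+c_2\gamma\,\|V^k_t-\nabla E^{k,t}_{\lambda}(w^k_t)\|_2^2
\end{equation*}
for absolute constants $c_1,c_2$ chosen so that subsequent terms cancel cleanly (the factor $1/6$ in $\gamma$ is tuned precisely for this). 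Since $V^k_t-\nabla E^{k,t}_{\lambda}(w^k_t)=\frac{1}{b}\sum_{j\in I}(\nabla f_j(w^k_t)-\nabla f_j(\tilde w^k))+\nabla f(\tilde w^k)-\nabla f(w^k_t)$, the standard SVRG mini-batch variance bound gives $\EE\|V^k_t-\nabla E^{k,t}_{\lambda}(w^k_t)\|_2^2\leq \frac{L^2}{b}\,\EE\|w^k_t-\tilde w^k\|_2^2$.

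The main obstacle is handling the $\|w^k_t-\tilde w^k\|_2^2$ term, which couples iterations within an epoch. I would follow the Lyapunov-function approach: expand $w^k_t-\tilde w^k=-\gamma\sum_{s=1}^{t-1}\G^{k,s}_{\gamma,A}(w^k_s)$ (cf.\ \eqref{gradmap}), bound $\EE\|w^k_t-\tilde w^k\|_2^2$ by a sum of $\EE\|\G^{k,s}_{\gamma,E}(w^k_s)\|_2^2$ plus variance terms (adding and subtracting $\nabla E^{k,s}_{\lambda}$), and then construct a Lyapunov function of the form $\Psi^k_t=\EE[\tPhi_{\lambda}(w^k_t)]+\rho_t\,\EE\|w^k_t-\tilde w^k\|_2^2$ with coefficients $\rho_t$ chosen recursively so that the cross-terms telescope. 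The choice $b=m^2$ ensures that after summing over the $m$ inner steps the accumulated variance contribution is dominated by a constant multiple of $\sum_t\EE\|\G^{k,t}_{\gamma,E}(w^k_t)\|_2^2$, leaving a net inequality
\begin{equation*}
\EE[\tPhi_{\lambda}(\tilde w^{k+1})]\leq \EE[\tPhi_{\lambda}(\tilde w^{k})]-c_3\gamma\sum_{t=1}^{m}\EE\|\G^{k,t}_{\gamma,E}(w^k_t)\|_2^2.
\end{equation*}

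Finally, I would telescope this inequality over $k=1,\dots,S$, use $\tPhi_{\lambda}(\tilde w^{S+1})\geq \tPhi_{\lambda}(w^*_{\lambda})$ to obtain
\begin{equation*}
\frac{1}{Sm}\sum_{k=1}^S\sum_{t=1}^m\EE\|\G^{k,t}_{\gamma,E}(w^k_t)\|_2^2\leq \frac{36\,L_{\lambda}}{Sm}\bigl(\tPhi_{\lambda}(\tilde w^1)-\tPhi_{\lambda}(w^*_{\lambda})\bigr),
\end{equation*}
and observe that the left-hand side is exactly $\EE\|\G^{R,T}_{\gamma,E}(w^R_T)\|_2^2$ by the uniform sampling of $R\in\{1,\dots,S\}$ and $T\in\{1,\dots,m\}$. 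Substituting $L_{\lambda}=L+(Sm)^{\theta}$ yields the stated bound $\tilde\DD\,(L+(Sm)^{\theta})/(Sm)$ with $\tilde\DD=36(\tPhi_{\lambda}(\tilde w^1)-\tPhi_{\lambda}(w^*_{\lambda}))$.
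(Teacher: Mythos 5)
Your plan is correct and follows essentially the same route as the paper: the paper's proof of Lemma \ref{vrlemma} is likewise an adaptation of the ProxSVRG+ analysis of \citet{li2018} to the surrogate $E^{k,t}_{\lambda}(w)+h(w)$, using the majorization/tangency properties of Property \ref{Eprop} to telescope in $\tPhi_{\lambda}$, the mini-batch variance bound $\EE\|V^k_t-\nabla E^{k,t}_{\lambda}(w^k_t)\|_2^2\leq \frac{L^2}{b}\EE\|w^k_t-\tilde w^k\|_2^2$ with $b=m^2$, and the stepsize $\gamma=1/(6L_{\lambda})$ to arrive at the constant $36$.
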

\begin{theorem}	
	\label{vrconv}	
	Assume that $g(w)$ is Lipschitz continuous with parameter $l$ and 
	$\bar{\gamma}=\frac{1}{N^{\tau}}$ for $\tau\leq\theta$. The output $\bar{w}^R_T$ 
	of VRSPA satisfies the following inequality. 	
	\begin{alignat}{6}
	&\EE\left[\dist(0,\G_{\bar{\gamma}}(\bar{w}^R_T))\right]&\leq&\sqrt{\frac{\left(L+(Sm)^{\theta}\right)
			\left(\DD+18l^2(Sm)^{-\theta}\right)}{Sm}}+\frac{2l}{(Sm)^{\theta}}\left(2N^{\tau}+L\right),\nonumber
	\end{alignat}	
	where $\DD=36(\Phi(w^1)-\Phi(w^*))$ and $w^*$ is a global minimizer of $\Phi(\cdot)$.		
\end{theorem}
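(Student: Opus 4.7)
The plan is to follow essentially the same template as the proof of Theorem \ref{mbconv}, replacing the role of $w^R$ by the doubly indexed iterate $w^R_T$ and using Lemma \ref{vrlemma} in place of Lemma \ref{th:1}. First I would verify the requirement $\bar{\gamma}\geq\gamma$ needed to apply Property \ref{eq:switch}. Since $\gamma=\frac{1}{6L_{\lambda}}=\frac{1}{6(L+(Sm)^{\theta})}\leq\frac{1}{(Sm)^{\theta}}\leq\frac{1}{N^{\theta}}\leq\frac{1}{N^{\tau}}=\bar{\gamma}$, where the middle inequality uses $Sm\geq N$ coming from $S=\lceil N/m\rceil$ and the last uses $\tau\leq\theta$, this condition is met.

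Next I would apply Property \ref{eq:switch} at the random point $w^R_T$ with $\zeta^{\lambda}(w^R_T)=\bar{w}^R_T$ to obtain the pointwise bound
\begin{alignat}{6}
\dist(0,\G_{\bar{\gamma}}(\bar{w}^R_T))\leq ||\G^{R,T}_{\gamma,E}(w^R_T)||_2+2l\lambda\left(\tfrac{2}{\bar{\gamma}}+L\right).\nonumber
\end{alignat}
Taking expectations, applying Jensen's inequality to move the expectation under the square root, and then invoking Lemma \ref{vrlemma} would yield
\begin{alignat}{6}
\EE\left[\dist(0,\G_{\bar{\gamma}}(\bar{w}^R_T))\right]\leq\sqrt{\tilde{\DD}\,\frac{L+(Sm)^{\theta}}{Sm}}+2l\lambda\left(\tfrac{2}{\bar{\gamma}}+L\right).\nonumber
\end{alignat}

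Finally I would convert $\tilde{\DD}$ into $\DD$ via Property \ref{le:2}. Since the lemma uses $\tilde{\DD}=36(\tPhi_{\lambda}(\tilde{w}^1)-\tPhi_{\lambda}(w^*_{\lambda}))$, multiplying the Property \ref{le:2} bound by $36$ gives $\tilde{\DD}\leq 36(\Phi(w^1)-\Phi(w^*))+18l^2\lambda=\DD+18l^2(Sm)^{-\theta}$, where I substitute the algorithm's choice $\lambda=(Sm)^{-\theta}$. The additive residual term becomes $2l\lambda(2/\bar{\gamma}+L)=\frac{2l}{(Sm)^{\theta}}(2N^{\tau}+L)$, reproducing the stated bound exactly.

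I don't anticipate a genuine obstacle here since the argument is a direct adaptation of the MBSPA proof; the only bookkeeping care is to track the different constants introduced by the variance-reduction step size $\gamma=\frac{1}{6L_{\lambda}}$ and the factor-of-$36$ in $\tilde{\DD}$ as specified in Lemma \ref{vrlemma}, and to confirm the index $R,T$ inherits the required uniform randomization so that the single-iterate bound from Lemma \ref{vrlemma} can be applied pointwise after Property \ref{eq:switch}.
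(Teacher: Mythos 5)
Your proposal is correct and follows essentially the same route as the paper, which explicitly notes that the proof of Theorem \ref{vrconv} mirrors that of Theorem \ref{mbconv}: verify $\bar{\gamma}\geq\gamma$, apply Property \ref{eq:switch} at $w^R_T$, use Jensen's inequality together with Lemma \ref{vrlemma}, and convert $\tilde{\DD}$ to $\DD+18l^2(Sm)^{-\theta}$ via Property \ref{le:2}. All the constants ($36$ in $\tilde{\DD}$, $\gamma=\frac{1}{6L_{\lambda}}$, and the residual term) check out as you computed them.
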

\begin{corollary}	
	\label{fscomp}	
	Assume that $g(w)$ is Lipschitz continuous with parameter $l$. To obtain an $\epsilon$-accurate 
	solution \eqref{epsol} using VRSPA, the gradient call complexity is 
	$O(n^{\frac{2}{3}}\epsilon^{-3})$ and the proximal operator complexity is $O(\epsilon^{-3})$ 
	choosing $\alpha=\theta=\frac{1}{3}$, and $\tau=0$. 		
\end{corollary}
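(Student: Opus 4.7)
The plan is to mimic the proof of Corollary \ref{mbcomp} but applied to the bound from Theorem \ref{vrconv}, tracking the orders introduced by the choices $\alpha=\theta=\frac{1}{3}$, $\tau=0$. First I would substitute these into the stated bound. Since $m=\lceil n^{1/3}\rceil$ and $S=\lceil N/m\rceil$, we have $Sm=\Theta(N)$, so the dominant terms become
\begin{alignat}{6}
\EE\left[\dist(0,\G_{\bar{\gamma}}(\bar{w}^R_T))\right]
&\leq \sqrt{\frac{(L+N^{1/3})(\DD+18l^2 N^{-1/3})}{N}}
+\frac{2l}{N^{1/3}}(2+L)\nonumber\\
&=O(N^{-1/3})+O(N^{-1/3})+O(N^{-1/3})
=O(N^{-1/3}).\nonumber
\end{alignat}
Hence an $\epsilon$-accurate solution in the sense of \eqref{epsol} is guaranteed after $N=O(\epsilon^{-3})$ inner iterations in total.

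Next I would tally the per-iteration cost. Each inner iteration invokes $\prox_{\lambda g}$ once and $\prox_{\gamma h}$ once, and there are $Sm=\Theta(N)$ such inner iterations in total, so the proximal operator complexity is $O(N)=O(\epsilon^{-3})$. For gradient calls, each outer iteration $k$ requires $n$ calls to form $G^k=\nabla f(\tilde{w}^k)$, plus $m$ inner iterations each drawing $b=m^2$ stochastic gradients; the per-outer-iteration cost is therefore $n+mb = n+m^3 = n+O(n) = O(n)$. Summing over $S=O(N/m)=O(N n^{-1/3})$ outer iterations yields total gradient call complexity
\begin{alignat}{6}
S\cdot O(n)=O\!\left(\frac{N}{n^{1/3}}\cdot n\right)=O(N\,n^{2/3})=O(n^{2/3}\epsilon^{-3}).\nonumber
\end{alignat}

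The argument is essentially a bookkeeping calculation given Theorem \ref{vrconv}, so I do not anticipate a serious obstacle; the only slightly subtle point is checking that the choice $\bar{\gamma}=\frac{1}{N^{\tau}}=1\geq \gamma=\frac{1}{6(L+(Sm)^{\theta})}$ is feasible so that Property \ref{eq:switch} (implicitly invoked in Theorem \ref{vrconv}) applies, and that the balance of exponents $0.5\theta-0.5$ for the first term and $-\theta$ for the last term of the bound is indeed optimized at $\theta=\frac{1}{3}$. With those verifications the exponent choices are seen to be the ones that equalize the dominant error contributions, completing the proof.
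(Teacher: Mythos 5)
Your proposal is correct and follows essentially the same route as the paper, which states that the proof of Corollary \ref{fscomp} parallels that of Corollary \ref{mbcomp}: substitute $\alpha=\theta=\frac{1}{3}$, $\tau=0$ into Theorem \ref{vrconv}, use $Sm=\Theta(N)$ to get the $O(N^{-1/3})$ rate, and count $O(N)$ proximal operations and $S\cdot(n+m^3)=O(Nn^{2/3})$ gradient calls. Your side-checks (that $\bar{\gamma}\geq\gamma$ holds and that $\theta=\frac{1}{3}$ balances the exponents) are also the right ones.
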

The proofs of Theorem \ref{vrconv} and Corollary \ref{fscomp} are similar to those of Theorem 
\ref{mbconv} and Corollary \ref{mbcomp}, and can be found in Section 6 of the supplementary 
material.	
\section{Experiments}
\label{NE}
We conducted experiments comparing our algorithms to SDCAM \citep{liu2017} for the problem of 
non-negative sparse PCA \eqref{eq:ap1} on datasets MNIST 
\citep{lecun1998} and RCV1 \citep{lewis2004}. The dimensions of 
MNIST are $n=60,000$ and $d=784$, and those of RCV1 are $n=804,414$ and $d=47,236$. All experiments 
were conducted using MATLAB 2017b on a Mac Pro with a 2.7 GHz 
12-core Intel Xeon E5 processor and 64GB of RAM. In Figures \ref{T1} we compare the 
performance of all algorithms, plotting the objective function versus wall-clock 
time. The values for $\alpha$ and $\theta$ established in Corollaries \ref{mbcomp} and \ref{fscomp} 
were used to implement MBSPA and VRSPA. 
It was hypothesized that the inferior performance of VRSPA was due to its smaller stepsize, so VRSPA2 
is VRSPA using the stepsize of MBSPA. All parameters of SDCAM were left unchanged as used in the 
available implementation\footnote{\url{http://www.mypolyuweb.hk/~tkpong/Matrix_sparse_MP_codes/}}. 
The regularizer's parameters were chosen as $\kappa=\frac{1}{d}$ and $\nu=1$. We observe that our 
algorithms were able to achieve faster convergence in both experiments. 

\pgfplotsset{every axis title/.append style={at={(0.5,0.95)}}}
\pgfplotsset{every tick label/.append style={font=\footnotesize}}
\begin{figure}[H]
	\centering
	\begin{tikzpicture}
	\begin{groupplot}[group style={group name=myplot,group size= 2 by 1},height=6cm,width=7.4cm]
	\nextgroupplot[title=MNIST,label 
	style={font=\normalsize},xlabel=time (s),ylabel=$\Phi(w)$,y label style={at={(axis description 
	cs:0.04,.5)}},xtick={0,1,...,7},ytick={-8,-10,...,-18}]
	\addplot[line width=2pt,draw=blue]
	table[x=x,y=y]
	{MBSPAm.dat};\label{plots:plot1}
	\addplot[line width=2pt,draw=red]
	table[x=x,y=y]
	{VRSPAm.dat};\label{plots:plot2}
	\addplot[line width=2pt,draw=cyan]
	table[x=x,y=y]
	{VRSPA2m.dat};\label{plots:plot3}		
	\addplot[line width=2pt,draw=black,dotted]
	table[x=x,y=y]
	{SDCAMm.dat};\label{plots:plot4}
	\nextgroupplot[title=RCV1,label 
	style={font=\normalsize},xlabel=time (s), 
	ytick={-0.001,-0.003,...,-0.011},xtick={0,2000,...,8000}]
	\addplot[line width=2pt,draw=blue]
	table[x=x,y=y]
	{MBSPAr.dat};\label{plots:plot1}
	\addplot[line width=2pt,draw=red]
	table[x=x,y=y]
	{VRSPAr.dat};\label{plots:plot2}
	\addplot[line width=2pt,draw=cyan]
	table[x=x,y=y]
	{VRSPA2r.dat};\label{plots:plot3}			
	\addplot[line width=2pt,draw=black,dotted]
	table[x=x,y=y]
	{SDCAMr.dat};\label{plots:plot4}
	\end{groupplot}
	% legend
	\path (myplot c1r1.south west|-current bounding box.south)--
	coordinate(legendpos)
	(myplot c2r1.south east|-current bounding box.south);
	\matrix[matrix of nodes,anchor=south,draw,inner sep=0.2em,draw]at([yshift=-0.5cm]legendpos)
	{\ref{plots:plot1}&MBSPA&[5pt]
	 \ref{plots:plot2}&VRSPA&[5pt]
	 \ref{plots:plot3}&VRSPA2&[5pt]
	 \ref{plots:plot4}&SDCAM&[5pt]\\};
	\end{tikzpicture}
	\caption{Comparison of algorithms of this paper and SDCAM \citep{liu2017} on datasets MNIST and 
	RCV1.} 
	\label{T1}
\end{figure}
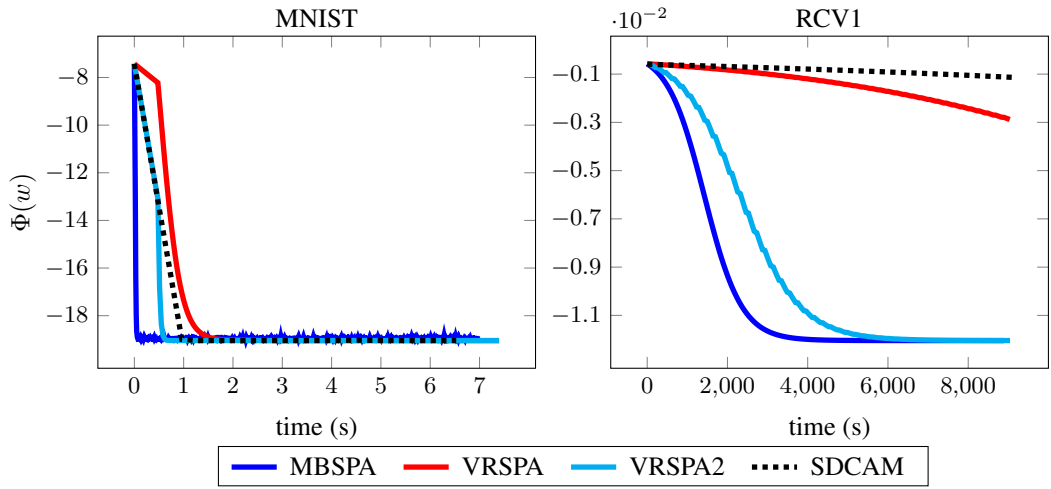

\section{Conclusion}
In this paper we considered minimizing a smooth non-convex loss function with a non-smooth non-convex 
regularizer with convex constraints. We presented two stochastic proximal gradient algorithms, and to 
the best of our knowledge, the first non-asymptotic convergence results for this class of problem. 
The convergence complexities in this paper are equal to or superior to the results found in 
\citep{xu2018} and \citep{metel2019} which consider the case of our problem setting when $h(w)=0$. 
In an empirical study we found our algorithms to converge faster than a state-of-the-art deterministic 
algorithm. 
\bibliography{PSGM}

\end{document}